\documentclass[a4paper,12pt,reqno]{amsart}

 \usepackage[T2A]{fontenc}
 \usepackage[utf8]{inputenc}
 \usepackage[ukrainian,english]{babel}
 \usepackage{amssymb,upref}
\usepackage{amsthm}

\usepackage{graphicx}
\usepackage{mathrsfs}

\usepackage[text={130mm,200mm}]{geometry}

\theoremstyle{plain}
\newtheorem{theorem}{Theorem}
\newtheorem*{theorem*}{Theorem}

\newtheorem*{corollary*}{Corollary}
\newtheorem{lemma}{Lemma}
\newtheorem*{lemma*}{Lemma}

\newtheorem*{proposition*}{Proposition}

\newtheorem*{conjecture*}{Conjecture}
\theoremstyle{definition}
\newtheorem{definition}{Definition}
\newtheorem*{definition*}{Definition}
\theoremstyle{remark}

\newtheorem*{remark*}{Remark}

\begin{document}

\title[Asymptotic Mean of Digits of the $Q_s$--representation]{Asymptotic Mean of Digits of the $Q_s$--representation of the Fractional Part of a Real Number and Related Problems of Fractal Geometry and Fractal Analysis}

\author{M. V. Pratsiovytyi}
\address[M. V. Pratsiovytyi]{Institute of Mathematics of NAS of Ukraine,  Dragomanov Ukrainian State University, Kyiv, Ukraine\\
ORCID 0000-0001-6130-9413}
\email{prats4444@gmail.com}
\author{S. O. Klymchuk}
\address[S. O. Klymchuk]{Institute of Mathematics of NAS of Ukrain, Kyiv, Ukraine\\
ORCID 0009-0005-3979-4543}
\email{svetaklymchuk@imath.kiev.ua}

\subjclass{11K50, 26A30}

% Key words
\keywords{Asymptotic mean of digits, digit
frequency of number, $Q_s$--representation of real numbers, sets of
Besicovitch--Eggleston type, Hausdorff--Besicovitch fractal
dimension.}

\thanks{Scientific Journal of M. P. Drahomanov National Pedagogical University. Series 1. Physical and Mathematical Sciences. -- Kyiv: M. P. Drahomanov National Pedagogical University, 2011, No. 12, pp. 186–195.}

\begin{abstract}
We introduce a concept of asymptotic mean of digits (symbols) in the
$Q_s$--representation of a real number, that is a generalization of the
$s$--adic representation and have a self-similar geometry. We
discuss its relationship with the frequencies of digits and formulate
problems related to the concept. We study the topological,
metric, and fractal properties of the set of real numbers that have no asymptotic mean of $Q_s$--symbols. Also we study topological, metric and fractal
properties of the sets of real numbers that have asymptotic mean of
$Q_3$--symbols which is equal to value of digit frequency of number.
\end{abstract}

\maketitle
\section{Introduction}

Traditionally, a numeral system is understood as a collection of methods and techniques for the representation and encoding of numbers (natural, integer, real, complex, hypercomplex, etc.). Over the past decades the understanding of numeral systems has been significantly expanded and the role and significance of the base of a numeral system, its alphabet, and related structural components have been reconsidered.

Today, nontraditional numeral systems are widely used in mathematics and its applications. In particular, these include representations of numbers by nega--$s$--adic expansions, continued fractions (in particular $A_2$--fractions \cite{Xin}), expansions of numbers into Cantor series \cite{CantSer}, Lüroth series \cite{Lurot}, Engel series \cite{Engel}, Ostrogradsky--Sierpiński--Pierce series \cite{Sierp}, the mediant representation, as well as $Q^*$--representation and $Q_{\infty}$--representation \cite{PrQzobr}, etc. Each method of numbers representing corresponds to its own metric relations, its own geometry, topology, and metric and probabilistic theories. This creates the background for the development of the metric, probabilistic, and fractal theories of real numbers.

Every real number $x$ one can represent as the sum of its integer part $[x]$ and fractional part $\{x\}$, that is, for any real number $x$ the equality $x = [x] + \{x\}$ holds. We study objects of continuous mathematics and the fractional part of a real number is of particular importance to us since it allows us to address a number of questions in the metric and fractal theories of real numbers.

Let $\mathcal{A}_s=\{0,1,\ldots,s-1\}$ be the alphabet of the $s$--adic numeral system, and let $Q_s=\{q_0, \ldots, q_{s-1}\}$ be a fixed set satisfying the following conditions:
$$
\begin{cases}
1)\:\:\: q_i>0, \, \forall i\in \mathcal{A}_s;\\
2)\:\:\: q_0+\ldots+q_{s-1}=1.
\end{cases}
$$
Let $\beta_0=0$, $\beta_1=q_0$, and $\beta_j=q_0+q_1+\ldots+q_{j-1}$, where $j\in\mathcal{A}_s$.

It is known \cite{Pr} that \emph{for every number $x$ from the interval $[0;1]$ there exists a sequence $(\alpha_n)$, where $\alpha_n\in\mathcal{A}_s$, such that}
$$
x=\beta_{\alpha_1}+\sum\limits^{\infty}_{k=2}
  \left[ \beta_{\alpha_k}\prod\limits^{k-1}_{j=1}q_{\alpha_j} \right]. \eqno(1)
$$
The representation of a real number $x$ by series (1) is called its \emph{$Q_s$--representation}. We symbolically denote it as
$$
x=\Delta^{Q_s}_{\alpha_1\alpha_2\ldots\alpha_k\ldots}. \eqno(2)
$$
The notation is called the \emph{$Q_s$-representation (expansion) of the number $x$.} In this case  $\alpha_k(x)$ is called the $k$--th $Q_s$--digit of the number $x$.

For $q_i=\dfrac{1}{s}$ for all $i\in\mathcal{A}_s$ the $Q_s$--representation is equal to the classical \emph{$s$--adic representation.}

There exist at most two formally distinct $Q_s$--representations for any number $x$. Numbers that have two representations (one with period $(0)$ and the other with period $(s-1)$) are called \emph{$Q_s$--rational.} The remaining numbers have only one representation and are called \emph{$Q_s$--irrational.} The set of $Q_s$--rational numbers is countable.

To eliminate any ambiguity in defining the $k$--th $Q_s$--digit of a number, we consider only representations with period $(0)$.

After studying its geometry and developing the metric and probabilistic theories of numbers the $Q_s$--representation of real numbers has been used for constructing and investigating fractal linear sets and other objects. This is done by imposing conditions on the use of symbols from the alphabet of the $Q_s$--representation of numbers.

The present work is devoted to the introduction of a new concept --- asymptotic mean of digits of a number with respect to its $Q_s$--representation and to the function associated with it. In terms of this concept, mathematical objects with fractal properties can be formally and simply defined. We study the simplest properties asymptotic mean of digits function and indicate some of its applications.

\section{The object of our study and the related problems.}
\begin{definition}
If the limit $\lim\limits_{n\to\infty}\frac{1}{n}\sum\limits^{n}_{i=1}\alpha_i(x)=r(x)$, 
where $\alpha_i(x)$ is the $i$--th digit of the $Q_s$--representation of the number $x$ exists, then its value $r(x)$ is called the \emph{asymptotic mean} (or simply the \emph{mean}) of the digits of the number $x$.
\end{definition}

A number with a periodic $Q_s$--representation has a well-defined asymptotic mean of its digits. For example, $r\left(\Delta^{Q_s}_{(i)}\right)=i$, $r\left(\Delta^{Q_s}_{(ij)}\right)=\dfrac{i+j}{2}$, where $i,j\in\mathcal{A}_s$.

It follows from the definition that the function $r$ takes values in the interval $[0, s-1]$. We note that the value of $r(x)$ does not depend on any finite quantity of $Q_s$--symbols of $x$.

Let us provide an example of a number that does not possess an asymptotic mean of its digits:
$$
x_0=\Delta^{Q_s}_{dcddcc\ldots\underbrace{d\ldots d}_{2^{k-1}}
        \underbrace{c\ldots c}_{2^{k-1}}\ldots}, \,\,\,\,
        \text{where}\,\,\,\, c,d\in \mathcal{A}_s.
$$

Thus, the asymptotic mean function $r$ is defined on an everywhere dense subset of the interval $[0,1]$. On another everywhere dense subset of the interval $[0,1]$ it is undefined.

So we highlight problems formulated in terms of the asymptotic mean of digits of a number’s $Q_s$--representation, namely:

\begin{enumerate}
  \item to study topological, metric, fractal and other properties of the sets of real numbers for which the asymptotic mean of digits exists and satisfies certain conditions, or does not exist;
  \item to study the properties of the function $r$, in particular the set of its values, level sets, and the fractal properties of its graph;
  \item to study the properties of linear combinations of level sets;
  \item to determine  solutions of the equation $r(x) = kx + b$, in particular its invariant points;
  \item to study the fractal properties of the distribution function $Y = r(X)$, where $X$ is a random variable with a given (e.g., uniform) distribution;
  \item to find the normal properties of numbers in terms of the asymptotic mean of digits;
  \item to investigate the content of the sets
       $$S_{\theta}\equiv \{x: r(x)=\theta\},$$ where $\theta\in[0;s-1]$, within Cantor--type sets
        $$C[Q_s,V]\equiv\left\{x:x=\Delta^{Q_s}_{\alpha_1\alpha_2\ldots\alpha_k\ldots}, \alpha_i\in V, i\in\mathcal{A}_s\right\},$$
        i.e., the intersection $C[Q_s, V] \cap S_{\theta}$.
\end{enumerate}

\section{The relationship between the asymptotic mean of digits and their frequencies. The asymptotic mean of digits function}

Let $N_i(x,k)$ be the quantity of the digits $i \in \mathcal{A}_s$ in the $Q_s$--representation 
$\Delta^{Q_s}_{\alpha_1\alpha_2\ldots\alpha_k\ldots}$ of a number $x \in [0,1]$ up to and including the $k$--th position, that is,
$$
N_i(x,k)=\# \{j:\, \alpha_j (x)=i, \, j\leqslant k\}.
$$

The \emph{frequency (asymptotic frequency) of the symbol $i$} in the $Q_s$--representation of the number $x$ is defined as the limit (if it exists)
$$
\nu_i(x)=\lim\limits_{k\to\infty}\displaystyle\frac{N_i(x,k)}{k}.
$$

We note that any finite set of digits of a number $x$ does not affect the frequency of its digits.

It is clear that the frequency function $\nu_i$ of the digit $i$ in the $Q_s$--representation of a number $x$ is well-defined for $Q_s$--irrational numbers. For $Q_s$--rational numbers it is properly defined once we adopt the convention of using only representations with period $(0)$.

As usual, if a number has period $(c_1 c_2 \ldots c_m)$, with $c_i \in \mathcal{A}_s$, then each symbol has a well--defined frequency \cite{Pr}.

The asymptotic mean of digits of a number one can regard as a certain analogue of the concept of digit frequency. In the binary numeral system the asymptotic mean of digits of a number coincides with the frequency of the digit $1$, that is, $r(x) = \nu_1(x)$. Indeed, from the definitions of digit frequency and digits asymptotic mean it follows that
$$
r(x)=\lim\limits_{n\to\infty}\frac{\alpha_1(x)+\ldots+\alpha_n(x)}{n}=
     \lim\limits_{n\to\infty}\frac{0 N_0(x,n)+1 N_1(x,n)}{n}=
     \lim\limits_{n\to\infty}\frac{N_1(x,n)}{n}=\nu_1(x).
$$

It follows from the above  that if the frequencies of all digits exist, then the asymptotic mean of digits also exists, and the following relation holds:
$$
r(x)=\nu_1(x)+2\nu_2(x)+\ldots+(s-1)\nu_{s-1}(x).
$$

Thus, for Besicovitch--Eggleston sets
$$
E[\tau_0,\tau_1,\ldots,\tau_{s-1}]=\{x:\nu_i(x)=\tau_i,\,\,i\in\mathcal{A}_s\},
$$
where $0<\tau_i<1$, $\tau_0+\tau_1+\ldots+\tau_{s-1}=1$ the inclusion $E[\tau_0,\tau_1,\ldots,\tau_{s-1}]\subset S_{\theta}$ holds if $\tau_1+2\tau_2+\ldots+(s-1)\tau_{s-1}=\theta$.

The Hausdorff--Besicovitch fractal dimension of such sets is calculated \cite{Pr} using the formula
$$
\alpha_0(E)=\frac{\ln\tau_0^{\tau_0}\tau_1^{\tau_1}\ldots\tau_{s-1}^{\tau_{s-1}}}
                 {\ln q_0^{\tau_0}q_1^{\tau_1}\ldots q_{s-1}^{\tau_{s-1}}}.
$$

\section{Normality of numbers and the role of normality in the theory of singular functions}

We can express the ``normal'' properties of a real number in terms of the concept of digit frequency in its representation. We call a property of an element {\it normal} if it holds for the vast majority (in a certain sense) of elements in that set.

A number $x$ is called \emph{normal on the base $s$ (weakly normal)} if for each $i \in \mathcal{A}_s$ the frequency exists and satisfies $\nu_i(x) = s^{-1}$. A number $x$ that is normal in every natural base $s \ge 2$ is called \emph{normal}.

An example of a number that is normal in base $s$ is 
$\Delta^s_{(0\,1\,2\,\ldots\,[s-2]\,[s-1])}$.

The concept of a normal numbers was introduced by É. Borel in 1909. Borel’s classical theorem \cite{Bor1} asserts that \emph{The set of normal numbers (the set of numbers from the interval $[0,1]$ whose $s$--adic representations have all digit frequencies equal to $s^{-1}$) has full Lebesgue measure.} Thus, almost all real numbers are normal.

D. Champernowne \cite{Champ} constructed the first interesting examples of non-periodic numbers that are normal in some base $s$ in 1933. He showed that the number
$$
\Delta^{10}_{1\, 2\, 3\, 4\, 5\, 6\, 7\, 8\, 9\, 10\, 11\, 12\, 13\,
14\, 15\, 16\, 17\, \ldots},
$$
whose digits consist of all natural numbers written in decimal notation, is weakly normal in base 10. He also conjectured that the number
$$
\Delta^{10}_{1\, 2\, 3\, 5\, 7\, 11\, 13\, 17\, 19\, 23\, \ldots},
$$
whose decimal expansion lists all prime numbers in increasing order, is normal in base 10. Copeland and Erdős proved this conjecture in 1946 \cite{CopErd}.

Borel’s theorem implies that the set of non-normal numbers (numbers that are not normal) has zero Lebesgue measure.

Let us consider the set of numbers in the interval $[0;1]$ whose asymptotic mean of digits does not exist, that is, the set
$$
S=\left\{x:r(x) \,\,\,\text{does not exist}\,\, \right\}.
$$

\begin{theorem}
The set $S$ is a continuous, everywhere dense and everywhere discontinuous set of zero Lebesgue measure. The set $S$ is superfractal, that is, its Hausdorff–Besicovitch dimension $\alpha_0(S)$ equals $1$.
\end{theorem}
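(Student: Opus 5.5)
We prove the properties in the order in which they appear in the statement, using only the geometry of $Q_s$--cylinders. A cylinder $\Delta^{Q_s}_{c_1\ldots c_m}$ is the set of numbers whose first $m$ digits are $c_1,\ldots,c_m$. It is a segment of length $q_{c_1}\cdots q_{c_m}$, and the digits after position $m$ can be prescribed arbitrarily. Since $r(x)$ does not depend on any finite set of digits, membership in $S$ is decided by the tail of the digit sequence alone.

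\emph{Everywhere dense.} Take any cylinder $\Delta^{Q_s}_{c_1\ldots c_m}$. Append to $c_1\ldots c_m$ the tail of the number $x_0$ from Section 2, with $c=0$, $d=s-1$. Along the ends of the blocks the running averages oscillate between roughly $\frac{s-1}{3}$ and $\frac{2(s-1)}{3}$, so the resulting number lies in $S$. Every interval contains a cylinder, so $S$ is dense.

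\emph{Continuum cardinality.} Map each binary sequence $(\varepsilon_k)$ to the number whose digits are
\[
\varepsilon_1\, a_1\, \varepsilon_2\, a_2 \ldots ,
\]
where the digits $a_k$ are taken from the sequence of $x_0$. Inserting $\varepsilon_k\in\{0,1\}$ at every second position only perturbs the averages by a bounded proportion, and we choose $c,d$ so that the oscillation survives: for $s\ge3$ take $d=s-1$, $c=0$ and check the two subsequential limits directly. For $s=2$, instead insert free digits only at positions $2^k$; there are $\log n$ such positions up to $n$, so the averages are unchanged. This gives an injection of a continuum into $S$.

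\emph{Nowhere dense (``everywhere discontinuous'') and of measure zero.} Here everywhere discontinuous should mean that $S$ contains no interval. This follows from the density of the complement, since periodic numbers have a mean and are dense. Measure zero follows from the Borel-type theorem for $Q_s$: for Lebesgue almost all $x$ the frequencies satisfy $\nu_i(x)=q_i$. This is the strong law of large numbers, because under Lebesgue measure the $Q_s$--digits are i.i.d. with $P(\alpha_k=i)=q_i$. By the relation of Section 3, almost every $x$ therefore has $r(x)=\sum i q_i$. Hence $\lambda(S)=0$.

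\emph{Dimension one.} This is the main obstacle. We need a subset of $S$ of dimension arbitrarily close to $1$. The plan is to splice a Besicovitch--Eggleston ``generic'' digit structure with sparse forcing blocks. Fix the frequency vector $\tau_i=q_i$, for which the dimension formula gives $\alpha_0(E)=1$. Build sequences in which, on a very sparse but growing set of windows, we place long blocks of the digit $s-1$ (or $0$). The windows are placed at positions $[N_k, N_k+\delta N_k]$ with $N_k$ growing super-exponentially, and the other positions are free. The resulting averages then alternate between approximately $\theta$ and $(1-\delta)\theta+\delta(s-1)$, so the mean does not exist. Call this set $F_\delta$. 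It is a Moran-type set in which a proportion at least $1-\delta$ of positions is free. To estimate $\dim_H F_\delta$ from below, we would use a mass distribution argument: put the Bernoulli measure with weights $q_i$ on the free positions, which produces Lebesgue-like mass. The forced positions shrink the lengths by a factor $q_{s-1}^{\delta N_k}$ without splitting the mass. Because $N_k$ is super-exponential, the liminf of $\log\mu(\Delta_n)/\log|\Delta_n|$ is at least $1-C\delta$, where $C$ depends on $\min q_i$. The delicate step is to control the ratio at the worst scales, which lie inside the forced windows. This is why we take window length $\delta N_k$ rather than comparable to $N_k$. By the mass distribution principle, $\alpha_0(S)\ge 1-C\delta$, and letting $\delta\to0$ gives $\alpha_0(S)=1$.
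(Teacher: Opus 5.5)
Your proposal is correct in substance, though three steps need tightening (below). For density, for ``everywhere discontinuity'' (empty interior via density of periodic points, the paper's rational-point argument) and for $s=2$ in the cardinality part, you follow the paper. The paper's sparse insertion, of $b_n\in\{0,s-1\}$ at positions $s^n+1$, works for every $s$. Your measure-zero argument is cleaner than the paper's appeal to ``abnormal numbers'': Lebesgue measure makes the $Q_s$-digits i.i.d.\ with law $(q_i)$, so $r(x)=\sum_i iq_i$ almost everywhere.

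The dimension part takes a genuinely different route. The paper uses sets $A_k$: free blocks of length $2^mk$ followed by forced blocks $0^{2^{m-1}}1^{2^{m-1}}$. It quotes $\alpha_0(A_k)=k/(k+1)$ and takes the supremum. You use super-exponentially sparse windows and prove the lower bound yourself, by mass distribution with the Bernoulli$(q_i)$ measure $\mu$ on free positions. Your version is self-contained and really covers general $Q_s$. The value $k/(k+1)$ is the $s$-adic one; for general $q_i$ the dimension of $A_k$ differs, though it still tends to $1$. The paper's choice of forcing both a block of $0$'s and a block of $1$'s has one advantage: every point of $A_k$ lies in $S$, with no typicality argument.

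That advantage points to the step you must repair. With windows filled by $s-1$ and all other positions free, $F_\delta\not\subset S$: if the free digits have mean $s-1$, then $r(x)=s-1$ exists. The oscillation between $\theta=\sum_i iq_i$ and $\frac{\theta+\delta(s-1)}{1+\delta}$ holds only for $\mu$-typical points. There are two fixes.
\begin{itemize}
\item Apply the mass distribution principle to $F_\delta\cap S$. It has $\mu$-measure $1$ by the strong law on the free digits, because $\theta<s-1$ (all $q_i>0$).
\item Alternate windows of $0$'s and of $(s-1)$'s. Then a limit $L$ would have to satisfy both $L=0$ and $L=s-1$, so all of $F_\delta$ lies in $S$.
\end{itemize}
You should also say why cylinder estimates suffice. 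An interval of length $\rho$ meets at most $2+1/\min_i q_i$ of the maximal cylinders of length $\le\rho$, so the cylinder bound transfers to arbitrary covers. With that, your bound $\liminf_n \ln\mu(\Delta_n)/\ln|\Delta_n|\ge 1-C\delta$ holds. It holds even at every point, with $C=|\ln\min_i q_i|/|\ln\max_i q_i|$.

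Smaller points:
\begin{itemize}
\item For the interleaving with $s\ge3$, checking the two subsequential limits is not enough. Since $\frac1n\sum_{k\le n}\varepsilon_k$ is not fixed, the values along your two subsequences are only confined to overlapping ranges when $s\le7$. Use block increments instead. The pair sums $\varepsilon_k+a_k$ are $\ge s-1\ge2$ on $d$-blocks and $\le1$ on $c$-blocks. Each block has length a fixed proportion of its starting index, so a limit of $\frac1n\sum_{k\le n}(\varepsilon_k+a_k)$ would be both $\ge2$ and $\le1$. Alternatively, use sparse insertion for every $s$.
\item The running mean of $x_0$ oscillates between $\frac{s-1}{2}$ and $\frac{2(s-1)}{3}$, not from $\frac{s-1}{3}$; non-existence is unaffected.
\item ``Nowhere dense'' is the wrong label, since $S$ is dense. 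What you prove, and what is meant, is that $S$ contains no interval.
\end{itemize}
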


\begin{proof}
\emph{ Continuity. } 
Let us consider the following form of the representation of a real number $x \in S$:
$$x\equiv
\Delta^{Q_s}_{\underbrace{0\ldots0}_{a_{01}}
          \underbrace{1\ldots1}_{a_{02}}\ldots
          \underbrace{[s-1]\ldots[s-1]}_{a_{(s-1)1}}
          \underbrace{0\ldots0}_{a_{02}}
          \underbrace{1\ldots1}_{a_{12}}\ldots
          \underbrace{[s-1]\ldots[s-1]}_{a_{(s-1)2}}\ldots}
\equiv\overline{\Delta}^{Q_s}_{\alpha_1\alpha_2\ldots\alpha_n\ldots}.$$

To a number $[0;1]\ni x_0=\Delta^{Q_s}_{b_1b_2\ldots b_n\ldots}$, where $b_i
\in\{0,s-1\}$   we associate the number $\hat{x}=f(x)=x_0*x$,
where $f(x)=\Delta^{Q_s}_{\beta_1\beta_2\ldots\beta_n\ldots}$ and
$$\beta_{s^n+1}(\hat{x})=b_n,\:\: \beta_j(\hat{x})=\alpha_j,\:\: \text{if } j\neq s^n+1,\:\: n=1,2,\ldots .$$

Distinct pairs $x_0$ and $x'_0$ correspond to distinct numbers $f(x_0)$ and $f(x'_0)$. Since we can choose $x_0$ in a continuum of ways then the resulting set of numbers $\hat{x}$ is also continuous. In this way we construct a mapping from the set $S$ onto a certain continuous set $S'$. Hence, the set $S$ is continuous. 

\emph{ Everywhere density.} The set $S$ is everywhere dense set since for any cylinder interval $\Delta^{Q_s}_{c_1c_2\ldots c_k}$ we can find a point  $S\ni x_0=\Delta^{Q_s}_{c_1c_2\ldots c_k\alpha_1\alpha_2\ldots\alpha_k\ldots}$ that belongs to this cylinder interval.

\emph{Everywhere discontinuity.} Let $x_1, x_2 \in S$ with $x_1 > x_2$. Starting from some index $(k+1)$, the $Q_s$--representations of $x_1$ and $x_2$ will differ; that is, there exists a natural number $k$ such that $c_{k+1}(x_1)\neq c_{k+1}(x_2)$  while $c_i(x_1) = c_i(x_2)$ for $i \le k$. In other words, 
$x_1=\Delta^{Q_s}_{c_1\ldots c_k\alpha_1\alpha_2\ldots}$ and $x_2=\Delta^{Q_s}_{c_1\ldots c_k\beta_1\beta_2\ldots}$.  
However, between $x_1$ and $x_2$ we can always find a rational point 
$\Delta^{Q_s}_{c_1(x_2) \ldots c_k(x_2) c_{k+1}(x_2) (0)} = x_0 \not\in S$.  
Therefore, the set $S$ is everywhere discontinuous.

\emph{Lebesgue Measure.} The set $S$ consists of all numbers whose limit
$\lim\limits_{n \to \infty} \frac{1}{n} \sum\limits_{i=1}^{n} \alpha_i(x)$
does not exist, in other words the limit
$$
\lim_{n\to\infty}\left(\dfrac{N_1(x,n)}{n}+
\dfrac{2N_2(x,n)}{n}+\ldots+\dfrac{(s-1)N_{s-1}(x,n)}{n}\right).
$$
does not exist.  

This means that at least one of the limits
$\lim\limits_{n \to \infty} \frac{N_1(x,n)}{n},$
$\lim\limits_{n \to \infty} \frac{N_2(x,n)}{n}, \ldots,$
$\lim\limits_{n \to \infty} \frac{N_{s-1}(x,n)}{n}$ 
does not exist. Thus, at least one of the $Q_s$--digits frequencies 
$\nu_1, \nu_2, \ldots, \nu_{s-1}$ is undefined. Therefore, the set $S$ contains only abnormal numbers.

The Lebesgue measure of the set of abnormal numbers equals zero. Since $S$ is a subset of the set of abnormal numbers, we have $\lambda(S) = 0$.

\emph{Superfractality.}  
Let us consider the set
$$
A_k=\left \{x:
         x=\Delta^{Q_s} {[\alpha_1 ... \alpha_{2k}01]
                     [\alpha_{t_1+1} ... \alpha_{t_1+2^2}0011]...
                     [\alpha_{t_{m-1}+1} ... \alpha_{t_{m-1}+2^m}
                     \underbrace{0...0}_{2^{m-1}}\underbrace{1...1}_{2^{m-1}}]}
    \right\},
$$
where $k$ is an arbitrary fixed natural number,  $t_m=2(k+1)(2^m-1)$,
$m\in N$ and $\alpha_{t_m+j}\in \mathcal{A}_s$,
$j=\overline{1,2^{m+1}k}$. We know \cite{PrTorb} that
$$\alpha_0\left(\bigcup_{k\in N}A_k\right)=\sup \alpha_0(A_k)=\sup_{k\in N}\frac{k}{k+1}=1.$$

It is easy to see that  $S\supset\bigcup\limits_{k\in N}A_k$.
Indeed \cite{PrTorb}, a number $x_0 \in A_k$ does not have a frequency for the digit $0$, since proceeding to infinity along different sequences $t_m$ and $t'_m$ yields different limits 
$\lim\limits_{m\to\infty}\dfrac{N_0(x_0,t_m)}{t_m}$ and $\lim\limits_{m\to\infty}\dfrac{N_0(x_0,t'_m)}{t'_m}$ 
(and similarly for the frequency of the digit $1$). Therefore, the asymptotic mean of digits for $x_0 \in A_k$ does not exist, which implies $x_0 \in S$.

On the other hand, the set $S$ is a subset of the abnormal numbers $A$, that is, $S \subset A$.  
According to the properties of the Hausdorff--Besicovitch dimension, if 
$\bigcup\limits_{k\in N}A_k\subset S\subset A$ then 
$\alpha_0(\bigcup\limits_{k\in N}A_k)\leqslant\alpha_0(S)\leqslant\alpha_0(A)$, 
that is, $1\leqslant\alpha_0(S)\leqslant1$. 
Hence, $\alpha_0(S) = 1$, which implies that the set $S$ is superfractal.
\end{proof}

Now we consider the set of numbers in the interval $[0,1]$ whose asymptotic mean of digits exists and equals the frequency of the digit $i$, where $i \in \{0,1,2\}$. That is, define the set $M=\underset{i}{\bigcup} M_i$, where $$M_i=\{x: r(x)=\nu_i(x)\}.$$

Let us consider the set $M_0$. According to the definition of the asymptotic mean of digits we have $$r(x)= \nu_1(x)+2\nu_2(x),$$ thus $\nu_1(x)+2\nu_2(x)= \nu_0(x)$. On the other hand $\nu_0(x)+\nu_1(x)+\nu_2(x)=1$. Hence, it follows that $$2\nu_1(x)+3\nu_2(x)=1.$$

\begin{lemma}
The set $M_0$ is a continuous, everywhere dense set of zero Lebesgue measure, whose Hausdorff–Besicovitch dimension is at least $0.87$.
\end{lemma}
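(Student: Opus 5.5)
The approach is to squeeze $M_0$ between a Besicovitch--Eggleston set from below and the set of abnormal numbers from above, using the relation $2\nu_1(x)+3\nu_2(x)=1$ derived just before the lemma. Here $s=3$. Every set
$E[\tau_0,\tau_1,\tau_2]$ with $0<\tau_i<1$, $\tau_0+\tau_1+\tau_2=1$ and $2\tau_1+3\tau_2=1$ satisfies $E[\tau_0,\tau_1,\tau_2]\subset M_0$. Indeed, all frequencies exist on it, so by Section 3 we have $r(x)=\tau_1+2\tau_2$. The two linear conditions on the $\tau_i$ give $\tau_1+2\tau_2=\tau_0$, and hence $r(x)=\nu_0(x)$.

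I will work with the concrete choice $\tau_0=\frac{7}{12}$, $\tau_1=\frac14$, $\tau_2=\frac16$. One checks directly that $2\cdot\frac14+3\cdot\frac16=1$.

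\emph{Continuity and density.} $E[\frac7{12},\frac14,\frac16]$ has continuum cardinality. To see this, fix one $Q_3$-sequence with these frequencies. Then, on a sparse set of positions (say $n^2$), replace its digits freely by $0$ or $1$. This does not change any frequency and produces a continuum of distinct numbers, so $M_0$ is continuous. For density, take any cylinder $\Delta^{Q_3}_{c_1\ldots c_k}$ and prefix $c_1\ldots c_k$ to a point of this set. Finitely many digits do not affect frequencies or $r$, so every cylinder meets $M_0$.

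\emph{Lebesgue measure.} By the Borel-type theorem for $Q_s$-representations (see \cite{Pr}), almost every $x$ has $\nu_i(x)=q_i$. Hence almost every $x$ lies in $M_0$ only if $q_0=q_1+2q_2$. Outside this exceptional relation, $M_0$ is contained in the null set of numbers whose frequencies differ from $(q_0,q_1,q_2)$, so $\lambda(M_0)=0$. In the classical ternary case $q_i=\frac13$, normal numbers have $r=1\neq\frac13=\nu_0$, so $M_0$ lies in the abnormal numbers, which have measure zero by Borel's theorem. I would state the result for this case, or assume $q_0\neq q_1+2q_2$.

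\emph{Dimension.} By monotonicity of $\alpha_0$ and the Besicovitch--Eggleston formula from Section 3,
$$\alpha_0(M_0)\ \geqslant\ \frac{\ln \tau_0^{\tau_0}\tau_1^{\tau_1}\tau_2^{\tau_2}}{\ln q_0^{\tau_0}q_1^{\tau_1}q_2^{\tau_2}}.$$
For $q_i=\frac13$ the right-hand side is $\bigl(\frac7{12}\ln\frac{12}7+\frac14\ln4+\frac16\ln6\bigr)/\ln3\approx 0.9596/1.0986\approx0.8735>0.87$.

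This choice of $\tau$ is essentially optimal. Maximizing entropy under the constraint gives $\tau_i\propto x^{w_i}$ with $w=(-1,1,2)$ and $2x^3+x^2-1=0$, so $x\approx0.657$ and $\tau\approx(0.583,0.252,0.165)$. That is why $0.87$ is the natural bound.

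The main obstacle is that the numerical constant depends on $Q_3$. For general $q_i$ the ratio above changes, so the bound $0.87$ must be tied to the ternary case (or to $Q_3$ close to uniform). I would make this hypothesis explicit. Otherwise, one would redo the optimization over the segment $2\tau_1+3\tau_2=1$ for the given $q_i$.
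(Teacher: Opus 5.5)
Your proposal is correct and follows the paper's proof. A Besicovitch--Eggleston subset $E[\tau_0,\tau_1,\tau_2]\subset M_0$ with $2\tau_1+3\tau_2=1$ gives continuity and the dimension bound, prefixing a cylinder's digits gives density, and disjointness from the normal numbers (where $r=1\neq\frac13=\nu_0$) gives measure zero.

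The differences are minor. You certify the bound at the explicit rational point $(\frac{7}{12},\frac14,\frac16)$, getting $\approx 0.8735$, whereas the paper locates the exact maximizer $\nu_2\approx 0.1655$ via the cubic $31x^3-23x^2+9x-1=0$ and Cardano's formula. You also state explicitly the ternary assumption $q_i=\frac13$ that the paper uses tacitly (through $\Delta^3$ and $\ln 3$). You are right to do so, since for $q_0=q_1+2q_2$ the set $M_0$ has full Lebesgue measure.
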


\begin{proof}
\emph{Continuity.} Since every point $x$ in the set $M_0$ satisfies the relation $2\nu_1(x)+3\nu_2(x)=1,$ the set $M_0$ contains all Besicovitch--Eggleston subsets  $E\equiv E[\tau_0,\tau_1,\tau_2]$ with $$2\tau_1+3\tau_2=1.$$ Therefore, the continuity of $M_0$ follows from the continuity of the sets $E$.

\emph{Everywhere density.} The set $M_0$ is everywhere dense, since every cylinder  $\Delta^3_{c_1\ldots c_k}=\{x:\alpha_j(x)=c_j,\:j=\overline{1,k}\}$ contains points from $M_0$. Specifically, if $x_0=\Delta^3_{\alpha_1\alpha_2\ldots\alpha_n\ldots}\in M_0$ then the point $x=\Delta^3_{c_1\ldots c_k\alpha_1\ldots\alpha_n\ldots}\in M_0 \cap\Delta^3_{c_1\ldots c_k}$.

\emph{Lebesgue Measure.} Since the set $M_0$ contains all Besicovitch--Eggleston subsets satisfying 
$$2\tau_1+3\tau_2=1,$$
it obviously does not contain the set of normal numbers $E\left[\dfrac{1}{3},\dfrac{1}{3},\dfrac{1}{3}\right]$. As the Lebesgue measure of the set of non--normal numbers equals zero, we have $\lambda(M_0)=0$.

\emph{Hausdorff--Besicovitch dimension.}  From the conditions $\nu_0(x)+\nu_1(x)+\nu_2(x)=1$ and $\nu_0=\nu_1(x)+2\nu_2(x)$ we determine
$\nu_1=\dfrac{1}{2}-\dfrac{3}{2}\nu_2$ and $\nu_0=\dfrac{1}{2}-\dfrac{3}{2}\nu_2+2\nu_2=\dfrac{1}{2}+\dfrac{1}{2}\nu_2.$

To compute the Hausdorff--Besicovitch dimension, we study the maximum of the function
$$
\begin{array}{ll}
y&=-\displaystyle\frac{\ln  x^{x}\cdot
\left(\frac{1}{2}-\frac{3}{2}x\right)^
{\frac{1}{2}-\frac{3}{2}x}\cdot
\left(\frac{1}{2}+\frac{1}{2}x\right)^{\frac{1}{2}+\frac{1}{2}x}}{\ln
3},
\end{array}
$$ where $x\in[0;1]$, i.e.\\
$
\begin{array}{ll}
y&=\displaystyle\frac{\ln x^{x}+\ln
\left(\frac{1}{2}-\frac{3}{2}x\right)^{\frac{1}{2}-\frac{3}{2}x}+
\left(\frac{1}{2}+\frac{1}{2}x\right)^{\frac{1}{2}+\frac{1}{2}x}}{-\ln
3},
\end{array}
$\\
$
\begin{array}{ll}
y'&=-\displaystyle\frac{1}{\ln3}\left(\ln x+1-
 \frac{3}{2}\ln\left(\frac{1}{2}-\frac{3}{2}x\right)-\frac{3}{2}
+\frac{1}{2}\ln\left(\frac{1}{2}+\frac{1}{2}x\right)+\frac{1}{2}\right)=\\
&=-\displaystyle\frac{1}{\ln 3} \left(\ln
x-\frac{3}{2}\ln\left(\frac{1}{2}-\frac{3}{2}x\right)+
\frac{1}{2}\ln\left(\frac{1}{2}+\frac{1}{2}x\right)\right)=\\
&=-\displaystyle\frac{1}{2\ln 3}\left(\ln\displaystyle\frac{x^2
\left(\frac{1}{2}+\frac{1}{2}x\right)}{\left(\frac{1}{2}-\frac{3}{2}x\right)^3}\right),
\end{array}
$\\
$
\begin{array}{ll}
y'& =0,\:\:\:\displaystyle\frac{x^2
\left(\frac{1}{2}+\frac{1}{2}x\right)}{\left(\frac{1}{2}-\frac{3}{2}x\right)^3}=1,
\end{array}
$\\
\[
\begin{cases}
4x^2(1+x)-(1-3x)^3=0,\\
x\neq\frac{1}{3};\\
\end{cases}
\]
$$31x^3-23x^2+9x-1=0.$$
Using Cardano’s formulas we obtain
$$x=\frac{\sqrt[3]{-15374-2\sqrt{66394497}}-\sqrt[3]{-15374+2\sqrt{66394497}}}{93}.$$
Thus, $x\approx 0,1655$. Then $\nu_0\approx 0,5828$, $\nu_1\approx
0,2517$, $\nu_2\approx 0,1655$.
$$y_{max}=-\frac{1}{\ln3}\ln0,5828^{0,5828}0,2517^{0,2517}0,1655^{0,1655}\approx 0,8733.$$
\end{proof}

\begin{lemma}
The set $M_1$ is a continuous, everywhere dense set of zero Lebesgue measure, whose Hausdorff--Besicovitch dimension is at least $\log_2 3$.
\end{lemma}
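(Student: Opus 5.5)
The plan is to mirror the proof of the previous lemma for $M_0$, working, as there, with the ternary case $s=3$ (all $q_i=\frac13$, so that dimensions are computed with $\ln 3$ in the denominator). The first step is to identify $M_1$ explicitly. If the frequencies $\nu_0,\nu_1,\nu_2$ all exist, then $r(x)=\nu_1(x)+2\nu_2(x)$, and the condition $r(x)=\nu_1(x)$ reduces to $\nu_2(x)=0$. Conversely, suppose $\nu_2(x)=0$ and $\nu_1(x)$ exists. Then $\frac1n\sum_{i\le n}\alpha_i(x)=\frac{N_1(x,n)}{n}+\frac{2N_2(x,n)}{n}\to\nu_1(x)$, so $r(x)=\nu_1(x)$. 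Hence
$$M_1\supset\{x:\ \nu_2(x)=0,\ \nu_1(x)\ \text{exists}\}\supset E[\tau_0,\tau_1,0]\quad(\tau_0+\tau_1=1).$$
So $M_1$ contains every Besicovitch--Eggleston type set with vanishing frequency of the digit $2$. In particular it contains the numbers in the Cantor-type set $C[Q_3,\{0,1\}]$ whose digit $1$ has a frequency.

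\emph{Continuity, density, measure.} Continuity follows from the fact that already $E[\frac12,\frac12,0]$ is a continuum. For instance, one can insert an arbitrary $0/1$ sequence at the sparse positions $3^n+1$ of a fixed point of the set; this is the same device used in the proof of Theorem~1, and it does not change any frequency. Everywhere density is proved exactly as for $M_0$: prepending a finite block $c_1\ldots c_k$ to a point of $M_1$ changes no frequency, so every cylinder $\Delta^3_{c_1\ldots c_k}$ meets $M_1$. Zero Lebesgue measure holds because no point of $M_1$ is normal, since $\nu_2=0\neq\frac13$. Thus $M_1$ lies in the set of non-normal numbers, which has measure zero by Borel's theorem.

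\emph{Dimension.} Here I would maximize the Besicovitch--Eggleston formula over the admissible frequency vectors, namely
$$-\frac{\ln \tau_0^{\tau_0}\tau_1^{\tau_1}0^{0}}{\ln 3},\qquad \tau_0+\tau_1=1,$$
with the convention $0^0=1$. The maximum is attained at $\tau_0=\tau_1=\frac12$ and equals $\frac{\ln 2}{\ln 3}=\log_3 2\approx 0.63$. The stated bound should be read as $\log_3 2$; the value $\log_2 3>1$ cannot bound a subset of $[0,1]$.

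The main obstacle is the boundary case $\tau_2=0$, since the dimension formula quoted earlier is stated only for $0<\tau_i<1$. I would handle it in one of two ways. The first is to invoke the version of the Besicovitch--Eggleston theorem that allows zero frequencies. The second, and more self-contained, is to note that $E[\frac12,\frac12,0]\cap C[Q_3,\{0,1\}]$ is the set of binary-normal sequences coded in the ternary Cantor set. The natural mass distribution on $C[Q_3,\{0,1\}]$ gives weight $2^{-n}$ to each cylinder of length $n$, which has length $3^{-n}$, and by Borel's theorem it gives full measure to this normal subset. The mass distribution principle then yields dimension $\log_3 2$. As an alternative, one can take $E[\tau_0,\tau_1,\varepsilon]$ with $\varepsilon\to0$, but these sets do not lie in $M_1$, so the Cantor-set argument is the one I would use.
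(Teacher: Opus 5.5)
Your proposal is correct and follows the paper's route: it uses the Besicovitch--Eggleston sets with $\tau_2=0$ contained in $M_1$ for continuity, prepends blocks for density, applies Borel's theorem for measure zero, and maximizes $-\ln\bigl(\tau_0^{\tau_0}\tau_1^{\tau_1}\bigr)/\ln 3$ at $\tau_0=\tau_1=\frac12$ for the dimension. You are also right that the paper's value $\log_2 3$ is a misprint, since its own computation gives $\ln 2/\ln 3=\log_3 2$; and your mass-distribution argument on $C[Q_3,\{0,1\}]$ justifies the boundary case $\tau_2=0$, which the paper skips by applying a formula it quoted only for $0<\tau_i<1$.
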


\begin{proof}
We establish the continuity, everywhere density, and Lebesgue measure of the set $M_1$ in a similar way.

\emph{Hausdorff--Besicovitch dimension.} From the conditions $\nu_0(x)+\nu_1(x)+\nu_2(x)=1$ and $\nu_2(x)=0$ we find $\nu_1(x)=1-\nu_0(x).$

To compute the Hausdorff--Besicovitch dimension, we study the maximum of the function
$$y=-\frac{\ln\nu_0(x)^{\nu_0(x)}(1-\nu_0(x))^{1-\nu_0(x)}}{\ln 3}.$$

The function attains its maximum at
$\nu_0(x)=\nu_1(x)=\dfrac{1}{2}$. Thus,
$$y_{max}=-\frac{\ln\frac{1}{2}^{\frac{1}{2}}\frac{1}{2}^{\frac{1}{2}}}{\ln3}=\log_2 3.$$
\end{proof}

Let us consider the subset $M_2=\{x: r(x)=\nu_2(x)\}.$ By definition we have
$$r(x)= \nu_1(x)+2\nu_2(x).$$
Hence $\nu_1(x)+2\nu_2(x)=\nu_2(x)$ which gives $\nu_1(x)+\nu_2(x)=0$, so that
$\nu_1(x)=\nu_2(x)=0$, $\nu_0(x)=1.$

\begin{lemma}
The set $M_2$ is anomalously fractal and everywhere dense set.
\end{lemma}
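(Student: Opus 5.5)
We read ``anomalously fractal'' in the sense usual for this circle of papers: a continuum set of Hausdorff--Besicovitch dimension $0$. Throughout we work with the ternary case ($s=3$), as in Lemmas 1 and 2. The first step identifies $M_2$ exactly. If $x\in M_2$, then both $r(x)$ and $\nu_2(x)$ exist and are equal. Mirroring the computation preceding the statement, this forces $\nu_1(x)=\nu_2(x)=0$ and $\nu_0(x)=1$.

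To justify that computation in general, without assuming a priori that $\nu_1$ exists, I would argue with the inequality
$$\frac1n\sum_{i\le n}\alpha_i(x)\ \ge\ \frac{N_1(x,n)}{n}+\frac{2N_2(x,n)}{n}.$$
Taking $\liminf$ gives
$$\nu_2(x)=r(x)\ \ge\ \liminf_n \frac{N_1(x,n)}{n}+2\nu_2(x),$$
so $\nu_2(x)=0$ and $N_1(x,n)/n\to0$. Conversely, if $\nu_0(x)=1$, then $N_1/n,N_2/n\to0$, so $r(x)=0=\nu_2(x)$. Hence
$$M_2=\{x:\nu_0(x)=1\}=E[1,0,0],$$
and every property can be checked on this explicit description.

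\emph{Everywhere density.} This goes exactly as in Lemma 1: frequencies ignore finite prefixes, so $\Delta^3_{c_1\ldots c_k(0)}\in M_2$ lies in every cylinder $\Delta^3_{c_1\ldots c_k}$.

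\emph{Continuality.} I would give an injection from $\{0,1\}^{\mathbb N}$ into $M_2$. A sequence $(b_n)$ is sent to the number whose digit in position $2^n$ is $b_n$, with all other digits equal to $0$. Among the first $N$ digits at most $\log_2 N+1$ are nonzero, so $\nu_0=1$. Distinct sequences give distinct digit strings, and all these strings avoid the period $(2)$, so they give distinct numbers. Hence $M_2$ has the cardinality of the continuum.

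\emph{Dimension zero.} This is the main step. Fix $t>0$ and choose $\varepsilon>0$ so small that
$$h(\varepsilon)+\varepsilon\ln 2<t\ln 3,\qquad h(\varepsilon)=-\varepsilon\ln\varepsilon-(1-\varepsilon)\ln(1-\varepsilon).$$
Every $x\in M_2$ lies in $\bigcup_{N}\bigcap_{n\ge N}G_n$. Here $G_n$ is the union of the rank-$n$ cylinders whose words contain at most $\varepsilon n$ nonzero digits. By the standard binomial estimate the number of such words is at most $e^{n(h(\varepsilon)+\varepsilon\ln2)}$, and each cylinder has diameter $3^{-n}$. Hence, for each $N$, the set $\bigcap_{n\ge N}G_n$ is covered by cylinders of diameter $3^{-n}$ for arbitrarily large $n$. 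The sum of the $t$-th powers of their diameters is at most
$$e^{n(h(\varepsilon)+\varepsilon\ln2-t\ln3)}\to0.$$
So $H^t\bigl(\bigcap_{n\ge N}G_n\bigr)=0$, and by countable stability $H^t(M_2)=0$. As $t>0$ is arbitrary, $\alpha_0(M_2)=0$. Zero Lebesgue measure follows immediately.

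The only delicate point is the entropy count. Equivalently, one can invoke the Besicovitch--Eggleston dimension formula quoted earlier, which gives $0$ at $(\tau_0,\tau_1,\tau_2)=(1,0,0)$ with the convention $0^0=1$. I would present the direct covering argument because the formula was stated only for strictly positive $\tau_i$.
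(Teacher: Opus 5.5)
Your proof is correct. The density step is the same as the paper's: you prefix a cylinder word to a point of $M_2$ (the paper writes $M_1$ there by a slip). The rest takes a different and more complete route.

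The paper obtains $M_2=E[1,0,0]$ from $\nu_1+2\nu_2=\nu_2$. That equation tacitly uses $r=\nu_1+2\nu_2$, so it presupposes that $\nu_1$ exists. The paper then gets $\alpha_0(M_2)=0$ in one line by substituting $(1,0,0)$ into the Besicovitch--Eggleston formula with $0^0=1$. It never addresses cardinality.

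Your version improves on each of these points:
\begin{itemize}
\item Your $\liminf$ argument removes the existence assumption. The conclusion $N_1(x,n)/n\to0$ there comes from $N_1(x,n)/n\le\frac1n\sum_{i\le n}\alpha_i(x)\to r(x)=0$, which is worth stating explicitly.
\item Your injection of $\{0,1\}^{\mathbb N}$ supplies the continuality that ``anomalously fractal'' presupposes and that the paper leaves implicit.
\item Your entropy covering proves $H^t(M_2)=0$ for all $t>0$ directly at the degenerate vector $\tau_1=\tau_2=0$. That vector lies outside the range $0<\tau_i<1$ for which the formula was quoted.
\end{itemize}

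The paper's route buys brevity, and it is sound, since the dimension formula does extend to degenerate frequency vectors with $0\ln 0=0$. Your route buys a self-contained argument that also gives $\lambda(M_2)=0$ for free. It carries over to an arbitrary $Q_3$ if you replace the diameter bound $3^{-n}$ by $(\max_i q_i)^n$ and $\ln 3$ by $-\ln\max_i q_i$ in the choice of $\varepsilon$.
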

\begin{proof}
\emph{Everywhere density.} The set $M_2$ is everywhere dense set since every cylinder $\Delta^3_{c_1\ldots c_k}=\{x:\alpha_j(x)=c_j,\:j=\overline{1,k}\}$ contains points from $M_2$. Specifically, if  $x_0=\Delta^3_{\alpha_1\alpha_2\ldots\alpha_n\ldots}\in M_1$ then the point
$x=\Delta^3_{c_1\ldots c_k\alpha_1\ldots\alpha_n\ldots}\in M_1 \cap\Delta^3_{c_1\ldots c_k}$.

\emph{Hausdorff--Besicovitch dimension.} Clearly, $M_2\equiv E[1,0,0]$ so the Hausdorff--Besicovitch dimension of $M_2$ is $$\alpha_0(E[1,0,0])=\frac{\ln1^10^00^0}{-\ln 3}=0.$$ Hence, $M_2$ is an anomalously fractal set.
\end{proof}

\end{document}